\newcommand{\R}{\mathbb{R}}
\newcommand{\N}{\mathbb{N}}
\newcommand{\E}{\mathbb{E}}
\newcommand{\PP}{\mathbb{P}}
\newcommand{\Eop}{\operatorname{\mathbb{\E}}}
\newcommand{\Pop}{\operatorname{\mathbb{\PP}}}
\newcommand{\F}{\mathbb{F}}
\newcommand{\A}{\mathcal{A}}
\newcommand{\K}{\mathcal{K}}
\newcommand{\I}{\mathcal{I}}
\newtheorem{theorem}{Theorem}
\newtheorem{corollary}[theorem]{Corollary}
\theoremstyle{definition}
\newtheorem{definition}[theorem]{Definition}
\numberwithin{equation}{section} \numberwithin{theorem}{section}
\def\0{\kern0pt\-\nobreak\hskip0pt\relax}
 \def\@serieslogo{%
 \vbox to\headheight{%
 \parindent\z@ \fontsize{6}{7\p@}\selectfont
October 14, 2012\endgraf
 \vss}}}
\def\makeoverbar#1#2#3#4#5#6#7{%
 \setbox0=\hbox{$\m@th#2\mkern#5mu{{}#3{}}\mkern#6mu$}%
 \setbox1=\null \dimen@=#4\fontdimen8#13 \dimen@=3.5\dimen@
 \advance\dimen@ by \ht0 \dimen@=-#7\dimen@ \advance\dimen@ by \wd0
 \ht1=\ht0 \dp1=\dp0 \wd1=\dimen@
 \dimen@=\fontdimen8#13 \fontdimen8#13=#4\fontdimen8#13
 \rlap{\hbox to \wd0{$\m@th\hss#2{\overline{\box1}}\mkern#5mu$}}
 \fontdimen8#13=\dimen@}
\def\mylabel#1#2{{\def\@currentlabel{#2}\label{#1}}}
\begin{document}

\title[Stochastic Comparison and Applications to Control]{A Note on Applications of Stochastic Ordering to Control Problems in Insurance and Finance}

\author[N. \smash{B\"auerle}]{Nicole B\"auerle${}^*$}
\address[N. B\"auerle]{Department of Mathematics,
Karlsruhe Institute of Technology, D-76128 Karlsruhe, Germany}

\email{\href{mailto:baeuerle@kit.edu}
{baeuerle@kit.edu}}


\thanks{Part of this research was carried out when the authors were visiting the Steklov Institute during the ``Stochastic Optimization and Optimal Stopping" workshop. We would like to thank the organizers for bringing us together. E. Bayraktar is supported in part by the National Science Foundation under
grants DMS 0906257, DMS 0955463, and DMS 1118673.}

\author[E. \smash{Bayraktar}]{Erhan Bayraktar$^\ddag$}
\address[E. Bayraktar]{Department of Mathematics,
University of Michigan, 530 Church Street, Ann Arbor, MI 48109, USA}

\email{\href{mailto:erhan@umich.edu} {erhan@umich.edu}}

\maketitle

\begin{abstract}
We consider a controlled diffusion process  $(X_t)_{t\ge 0}$  where the controller is allowed to choose the drift $\mu_t$ and the volatility $\sigma_t$ from a set $\K(x) \subset \R\times (0,\infty)$ when $X_t=x$. By choosing the largest $\frac{\mu}{\sigma^2}$  at every point in time an extremal process is constructed which is under suitable time changes stochastically larger than any other admissible process. This observation immediately leads to a very simple solution of problems where ruin or hitting probabilities have to be minimized. Under further conditions this extremal process also minimizes ``drawdown" probabilities.
\end{abstract}

\vspace{0.5cm}
\begin{minipage}{14cm}
{\small
\begin{description}
\item[\rm \textsc{ Key words}]
{\small Time changed continuous Martingale, Stochastic Ordering,}\\
{\small  Ruin Problem.}
\item[\rm \textsc{AMS subject classifications}]
{\small  60H30, 91B30. }
\end{description}
}
\end{minipage}

\section{Introduction}\label{sec:intro}\noindent
Consider a one-dimensional controlled diffusion process  $(X_t)_{t\ge 0}$  where the controller is allowed to choose the drift $\mu_t$ and the volatility $\sigma_t$ from a set $\K(x) \subset \R\times (0,\infty)$ when $X_t=x$. Note that such a process is not necessarily Markovian. We are essentially interested in the problem of minimizing or maximizing the probability that the process hits a certain barrier $b\in\R$. If $b=0$ this can be interpreted as  ruin. Problems like this often occur in continuous-time gambling problems or in actuarial or financial applications where one wants to avoid ruin. In \cite{MR812818} the authors, among others, consider in the setting of continuous-time gambling, the problem of reaching the target $1$ with maximal probability. They show that maximizing the ratio $\frac{\mu}{\sigma^2}$ at all time points solves the problem. The ratio $\frac{\mu}{\sigma^2}$ often occurs in discrete-time gambling and serves as a measure of superfairness (see e.g. \cite{ds}). The proof of this result given in  \cite{MR812818} follows standard techniques and consists of guessing the optimal control and then verifying that the associated value $Q$ satisfies $Q\ge V$  where $V$ is the value function. This procedure involves a number of analytic results.

We instead build on the following simple observation: When optimization criteria like hitting probabilities are considered, time transformations of the processes do not change such probabilities. Moreover for different processes which are admissible, it is possible to use different time changes. Finally constructing the time change in such a way that the diffusive part of the process is always the same, leads to a favorable situation. Indeed we will show in Theorem \ref{theo:comparisonviatimechange} that the process which is obtained by maximizing the ratio $\frac{\mu}{\sigma^2}$ at all time points (name this the extremal process) is under suitable time changes stochastically larger than any other admissible process. This is a very strong result and the proof we use here is very simple and based on the time change property of the Brownian motion.
It is worth mentioning that in \cite{hajek}, part III also a time change is used to compare processes, however the time change is different and applied to a zero drift process. The main aim in \cite{hajek} is to construct extremal processes w.r.t.\ the increasing convex ordering by dominating one process by convex combination of other processes.
For a recent paper on the stochastic comparison of Markov processes see \cite{ruwo11}. However note that our processes are not necessarily Markovian and allow a comparison only after a time change.

In case the maximal ratio $\frac{\mu}{\sigma^2}$ does not depend on $x$ we can even strengthen our result to showing that the extremal process maximizes $(X_t-\alpha M_t)_{t\ge 0}$ stochastically under suitable time changes where $\alpha\in[0,1]$ and $M_t$ is the running maximum of the process. Thus the probability of a 'drawdown' is minimized by the extremal  process.

Our paper is organized as follows: In the next section we introduce the basic model, the time change and our two main results. In Section \ref{sec:specialmodels} we consider a few special cases and in Section \ref{sec:ruin} we consider some ruin problems. Part of them have already been solved by stochastic control methods but we show here that our approach is much simpler.

\section{The Model}\label{sec:mod}\noindent
We consider a one-dimensional controlled diffusion process $(X_t)_{t\ge 0}$ on a filtered probability space $(\Omega, \F, (\F_t)_{t\ge 0},\Pop)$ with
\begin{equation}\label{eq:sde}
dX_t = \mu_t dt + \sigma_t d W_t,\quad X_0=x\in\R
\end{equation}
where $(W_t)_{t\ge 0}$ is an $\F$-Brownian motion. We assume that the drift $\mu_t$ and the volatility $\sigma_t$ can be chosen by a controller from a set $\K(x) \subset \R\times (0,\infty)$ when $X_t=x$. Thus, the set $\K(x)$ gives all admissible pairs of drift and volatility when the process is in state $x\in\R$. The drift  $(\mu_t)$ and volatility $(\sigma_t)$ are supposed to be $\F$-progressively measurable and satisfy
\begin{equation}\label{eq:integrability}
\int_0^t (|\mu_u|+\sigma^2_u) du <\infty\quad \Pop-a.s.
\end{equation}
for all $t\ge 0$.  Let us denote by $\A(x)$ the set of all processes that can be constructed in this way with $X_0=x$.

Suppose now $(X_t)_{t\ge 0}\in\A(x)$ is an arbitrary process. Obviously
$M_t := \int_0^t \sigma_u dW_u$ is a continuous local martingale. The quadratic variation of $(M_t)_{t\ge 0}$ is given by
$$ \langle M\rangle_t = \int_0^t \sigma^2_u du.$$
Define $T(t) := \inf\{u\ge 0\; |\; \langle M\rangle_u >t\}$. Then $M_{T(t)}$ is
equal to a Brownian motion (see e.g.\ \cite{ks}, Theorem 3.4.6) which we denote again by $(W_t)_{t\ge 0}$ for simplicity. We
introduce the following {\em time change}:
$$ X_{T(t)} = x+ \int_0^{T(t)}\mu_u du + M_{T(t)} = x + \int_0^{T(t)}\mu_u du +
W_t.$$ Changing the integration variable in the second term we obtain
\begin{eqnarray*}
  \int_0^{T(t)}\mu_u du &=& \int_0^t \mu_{T(u)} T'(u)du.
\end{eqnarray*}
Since $T(\langle M\rangle_t)=t$ we get:
\begin{equation}\label{eq:timechange}
X_{T(t)} = x+ \int_0^t
\frac{\mu_{T(u)}}{\sigma^2_{T(u)}} du+W_t.
\end{equation}

We assume now further that there exist measurable functions $m(\cdot)$ and $s(\cdot)>0$ such that
\begin{equation}\label{eq:maxpoints}
{m(x) \over s^2(x)}=\sup\left\{{\mu \over \sigma^2}: (\mu,\sigma) \in \K(x)\right\}, \quad x \in \R,
\end{equation}
and we denote this special process by
\begin{equation}\label{eq:SDE-extremal}
dX_t^* = m(X_t^*) dt + s(X_t^*) dW_t.
\end{equation}
In what follows we will assume that \eqref{eq:SDE-extremal} has a unique weak solution. This is for example guaranteed by
\[
\int_{x-\varepsilon}^{x+\varepsilon} \frac{1+|m(y)|}{s^{2}(y)}dy<\infty, \quad \text{for some $\varepsilon>0$}
\]
for all $x \in \mathbb{R}$. Recall that we already assumed that $s>0$.

In what follows we will show that $(X^*_t)_{t\ge 0}$ has some extremal properties in the class $\A(x)$. To this end, consider the following definition:

\begin{definition}\label{def:st}
For two stochastic processes $(X_t)_{t\ge 0}$ and $(Y_t)_{t\ge 0}$ we say that $(X_t)_{t\ge 0}$ is {\em stochastically smaller} than $(Y_t)_{t\ge 0}$ (written $(X_t) \le_{st} (Y_t)$) if
$$ \Eop h(X_{t_1},\ldots ,X_{t_n}) \le \Eop h(Y_{t_1},\ldots ,Y_{t_n})$$ for all $0\le t_1\le\ldots\le t_n, n\in\N$ and measurable functions $h:\R^n\to\R$ which are increasing in all components and are such that the expectations exist.
\end{definition}

Recall that two real-valued random variables $X$ and $Y$ are ordered by $X\le_{st} Y$ when $\Eop h(X)\le \Eop h(Y)$ for all $h:\R\to\R$ increasing where the expectations exist. The definition of $(X_t) \le _{st} (Y_t)$ given above is one of a number of equivalent statements: When we denote by $C[0,\infty)$ the set of continuous functions on $[0,\infty)$ and introduce the following partial ordering for $f,g\in C[0,\infty)$:
$$ f<_c g\quad\mbox{if}\; f(t) \le g(t), \forall t\ge 0,$$
Then  $(X_t) \le _{st} (Y_t)$ is equivalent to $$\Eop h\big((X_t)_{t\ge 0} \big) \le \Eop h\big((Y_t)_{t\ge 0} \big)$$
for all $h$ such that $h:C[0,\infty) \to\R$, $h$ is measurable, increasing w.r.t. $<_c$ and the expectations exist (see e.g. Theorem 4.1.1 in \cite{stoyan}). Now using Strassen's theorem (see e.g. \cite{szekli} Theorem 2.8.A), we know that $(X_t) \le _{st} (Y_t)$ is also equivalent to the existence of a common probability space $(\Omega,\mathcal{F},\Pop)$ and processes $(\hat{X}_t)_{t\ge 0}$ and  $(\hat{Y}_t)_{t\ge 0}$ on it such that $ (\hat{X}_t)_{t\ge 0} \stackrel{d}{=}  ({X}_t)_{t\ge 0}, $  $(\hat{Y}_t)_{t\ge 0}\stackrel{d}{=} (Y_t)_{t\ge 0}$ and $X_\cdot(\omega)  <_c Y_\cdot (\omega)$ for all $\omega\in\Omega$.

We obtain the following result:

\begin{theorem}\label{theo:comparisonviatimechange}
Suppose $(X_t^*)_{t\ge 0}\in \A(x)$ is defined as in \eqref{eq:SDE-extremal} and $(X_t)_{t\ge 0}\in \A(x)$ is arbitrary. Assume further that the function $m/s^2$ is locally Lipschitz and has linear growth.
Then there exist continuous and strictly increasing time changes $T, U:\R_+\to\R_+$ such that
$$ \big(X_{T(t)}\big) \le_{st} \big(X^*_{U(t)}\big).$$
\end{theorem}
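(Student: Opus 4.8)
The plan is to run the Dambis--Dubins--Schwarz time change of \eqref{eq:timechange} on \emph{both} the arbitrary admissible process and the extremal one, thereby reducing the two time-changed processes to solutions of SDEs with the \emph{same} (unit) diffusion coefficient, and then to compare these two processes on a single probability space by a one-dimensional comparison theorem for SDEs. The conclusion will follow from the (Strassen-type) coupling characterisation of $\le_{st}$ recalled before the statement.

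First I would fix $(X_t)_{t\ge 0}\in\A(x)$ and carry out the construction preceding \eqref{eq:timechange}: with $M_t=\int_0^t\sigma_u\,dW_u$, $\langle M\rangle_t=\int_0^t\sigma_u^2\,du$ and $T(t)=\inf\{u:\langle M\rangle_u>t\}$, the process $W_t:=M_{T(t)}$ is a Brownian motion for $\mathcal{G}_t:=\F_{T(t)}$, the map $T$ is continuous and strictly increasing because $\sigma>0$, and, writing $g:=m/s^2$,
$$Y_t:=X_{T(t)}=x+\int_0^t\beta_u\,du+W_t,\qquad \beta_u:=\frac{\mu_{T(u)}}{\sigma^2_{T(u)}}.$$
Admissibility $(\mu_u,\sigma_u)\in\K(X_u)$ together with \eqref{eq:maxpoints} gives the pointwise bound $\beta_u\le g(X_{T(u)})=g(Y_u)$, $\PP$-a.s. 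The identical construction for $(X^*_t)_{t\ge0}$, with $M^*_t=\int_0^t s(X^*_u)\,dW_u$ and $U(t)=\inf\{u:\langle M^*\rangle_u>t\}$, yields a Brownian motion $\widetilde W_t:=M^*_{U(t)}$, a continuous strictly increasing $U$ (since $s>0$), and, by exactly the change of variables performed in \eqref{eq:timechange},
$$Y^*_t:=X^*_{U(t)}=x+\int_0^t g(Y^*_u)\,du+\widetilde W_t.$$
Thus $Y$ and $Y^*$ solve, on their respective spaces, SDEs with unit diffusion coefficient and common initial value $x$, the drift of $Y$ being dominated pointwise by $g(Y_\cdot)$ and that of $Y^*$ being exactly $g(Y^*_\cdot)$.

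The core step is a comparison on one space. Since $g=m/s^2$ is locally Lipschitz with linear growth, the equation $dZ_t=g(Z_t)\,dt+dW_t$, $Z_0=x$, has a non-exploding, pathwise-unique strong solution $Z$, which I would realise on the space carrying $W$ and $\beta$ (i.e.\ on $(\Omega,(\mathcal{G}_t),\PP)$). A one-dimensional SDE comparison theorem (see, e.g., \cite[Proposition~5.2.18]{ks}) applied to $Y$ and $Z$ --- same driving Brownian motion $W$, same constant (hence Lipschitz) diffusion coefficient, same starting point, and drifts ordered by $\beta_u\le g(Y_u)$ with $g$ locally Lipschitz --- gives $Y_t\le Z_t$ for all $t\ge0$, $\PP$-a.s. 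On the other hand $Z$ and $Y^*$ are both weak solutions of $dR_t=g(R_t)\,dt+dB_t$, $R_0=x$ (for suitable Brownian motions $B$), so uniqueness in law for this equation --- a consequence of pathwise uniqueness together with the absence of explosion --- yields $Z\stackrel{d}{=}Y^*$ as random elements of $C[0,\infty)$.

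Finally I would assemble the pieces. Since $Y_\cdot(\omega)<_c Z_\cdot(\omega)$ for $\PP$-a.e.\ $\omega$, for every $0\le t_1\le\dots\le t_n$ and every measurable $h:\R^n\to\R$ that is increasing in each coordinate one has
$$\Eop h(Y_{t_1},\dots,Y_{t_n})\le\Eop h(Z_{t_1},\dots,Z_{t_n})=\Eop h(Y^*_{t_1},\dots,Y^*_{t_n}),$$
the last equality because $Z\stackrel{d}{=}Y^*$; hence $(Y_t)\le_{st}(Y^*_t)$, that is, $(X_{T(t)})\le_{st}(X^*_{U(t)})$ for the time changes $T,U$ constructed above. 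The step I expect to be the main obstacle is the invocation of the comparison theorem: the drift of $Y$ is not a function of the current state but the general $(\mathcal{G}_t)$-progressive process $\beta$, so one needs a version of the comparison theorem that allows a path-dependent drift on one side, pointwise dominated by the (locally Lipschitz) drift of the other, together with the verification that $\beta$ is indeed progressively measurable for the time-changed filtration $(\mathcal{G}_t)$. A minor additional technicality is the standard Dambis--Dubins--Schwarz caveat when $\langle M\rangle_\infty$ or $\langle M^*\rangle_\infty$ is finite, which is handled by the usual enlargement of the probability space and does not affect the conclusion.
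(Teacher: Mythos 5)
Your proposal is correct and follows essentially the same route as the paper: time-change both processes via Dambis--Dubins--Schwarz, realise the extremal drift equation as a strong solution driven by the same Brownian motion as $Y$ (the paper's $\hat{Y}$ is your $Z$), compare pathwise on that common space, and identify the law of $\hat{Y}$ with that of $Y^*$ by uniqueness. The one step you flag as the main obstacle --- invoking a comparison theorem when the drift of $Y$ is a general progressive process rather than a function of the state --- is precisely what the paper handles by writing the comparison out directly: since $\hat{Y}-Y$ has finite variation, $(Y_{t\wedge\tau_N}-\hat{Y}_{t\wedge\tau_N})^+\le\int_0^{t\wedge\tau_N}1_{\{Y_s>\hat{Y}_s\}}\bigl(\beta_s-g(\hat{Y}_s)\bigr)ds\le L_N\int_0^{t\wedge\tau_N}(Y_s-\hat{Y}_s)^+ds$ using $\beta_s\le g(Y_s)$ and local Lipschitzness, and Gronwall finishes, so no off-the-shelf theorem is needed.
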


\begin{proof}
We choose time change processes $U$ and $T$ such that $U(\int_0^t s(X_u^*)^2du)=t$ and $T(\int_0^t \sigma_u^2du)=t$. Then
$$ X^*_{U(t)} = x + \int_0^t {m\big(X^*_{U(u)}\big) \over s^2\big( X^*_{U(u)}\big)} du + W_t^*$$
and
$$ X_{T(t)} = x + \int_0^t {\mu_{T(u)} \over \sigma^2_{T(u)}} du + W_t$$
or when we use the definition $Y^*_t := X^*_{U(t)}$ and  $Y_t := X_{T(t)}$ we obtain:
\begin{eqnarray*}
  Y_t^* &=&  x + \int_0^t {m\big(Y^*_u\big) \over s^2\big( Y^*_{u}\big)} du + W_t^* \\
  Y_t &=&  x + \int_0^t {\mu_{T(u)} \over \sigma^2_{T(u)}} du + W_t.
\end{eqnarray*}
Since $m/s^2$ is locally Lipschitz and has linear growth, the SDE
\[
\hat{Y}_t=x + \int_0^t {m\big(\hat{Y}_u\big) \over s^2\big( \hat{Y}_{u}\big)} du + W_t,
\]
has a unique strong solution. Moreover, its law coincides with that of $Y^*$.
The advantage of working with $\hat{Y}$ is that, it is driven by the same Brownian motion $W$ that drives $Y$. We will show that $\hat{Y}$ dominates $Y$ path-wise, which will directly imply the statement of the theorem.
Let
\[
\tau_{N}:=\inf\{t \geq 0: \max\{|Y_t|, |\hat{Y}_t|\} \geq N\}.
\]
Since the difference $\hat{Y}-Y$ is of finite variation we have
\[
\begin{split}
(Y_{t \wedge \tau_N}-\hat{Y}_{t \wedge \tau_N})^{+}&\le\int_{0}^{t \wedge \tau_N}1_{\{Y_s>\hat{Y}_s\}}\left({\mu_{T(s)} \over \sigma^2_{T(s)}}- {m\big(\hat{Y}_s\big) \over s^2\big( \hat{Y}_{s}\big)}\right)ds
\\& \leq \int_{0}^{t \wedge \tau_N}1_{\{Y_s>\hat{Y}_s\}}\left({m\big(Y_s\big) \over s^2\big( Y_{s}\big)}- {m\big(\hat{Y}_s\big) \over s^2\big( \hat{Y}_{s}\big)}\right)ds
\\&\leq L_N \int_0^{t \wedge \tau_N}(Y_s-\hat{Y}_s)^+ds,
\end{split}
\]
which implies that $\hat{Y}_{t \wedge \tau_N} \geq Y_{t \wedge \tau_N}$ due to an application of Gronwall's lemma. Here, we first use Jensen's inequlity, the second inequality comes from \eqref{eq:maxpoints}, and $L_N$ is a Lipschitz constant. Now the result follows since $t \wedge \tau_N =t$ for large enough $N$.
\end{proof}

\begin{corollary}\label{cor:inf}
Suppose $(X_t^*)_{t\ge 0}\in \A(x)$ is defined as in \eqref{eq:SDE-extremal} and $(X_t)_{t\ge 0}\in \A(x)$ is arbitrary. Then under the assumptions of Theorem \eqref{theo:comparisonviatimechange} we obtain:
$$ \inf_{t\ge 0} X_t \le_{st} \inf_{t\ge 0}X_t^*.$$
\end{corollary}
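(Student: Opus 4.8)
The plan is to deduce the corollary from Theorem~\ref{theo:comparisonviatimechange} by turning the stochastic ordering of the two time-changed processes into an almost sure pathwise domination, pushing that domination through the infimum functional, and then reading the resulting coupling back as a stochastic ordering of the two real-valued infima. No new estimates are needed; the one thing to check is that taking the infimum commutes with the time change.

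First I would apply Strassen's theorem, in exactly the form recalled just before Theorem~\ref{theo:comparisonviatimechange}. Since that theorem gives $\big(X_{T(t)}\big)_{t\ge0}\le_{st}\big(X^*_{U(t)}\big)_{t\ge0}$, there is a probability space $(\Omega,\mathcal F,\Pop)$ carrying processes $(\hat X_t)_{t\ge0}\stackrel{d}{=}\big(X_{T(t)}\big)_{t\ge0}$ and $(\hat Y_t)_{t\ge0}\stackrel{d}{=}\big(X^*_{U(t)}\big)_{t\ge0}$ with $\hat X_\cdot(\omega)<_c\hat Y_\cdot(\omega)$, i.e.\ $\hat X_t(\omega)\le\hat Y_t(\omega)$ for every $t\ge0$, for $\Pop$-a.e.\ $\omega$. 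Writing the infimum of a continuous path as a countable infimum over rational times, so that it is a measurable $[-\infty,\infty)$-valued functional, this pathwise inequality yields
\[
\inf_{t\ge0}\hat X_t(\omega)\ \le\ \inf_{t\ge0}\hat Y_t(\omega)\qquad\text{for }\Pop\text{-a.e.\ }\omega .
\]

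Next I would identify the laws of the two sides. Because $T$ is continuous, strictly increasing, and its range is all of $[0,\infty)$, the set of sampled times $\{T(t):t\ge0\}$ equals $[0,\infty)$, so $\inf_{t\ge0}X_{T(t)}=\inf_{s\ge0}X_s$ and hence $\inf_{t\ge0}\hat X_t\stackrel{d}{=}\inf_{s\ge0}X_s$; the same reasoning gives $\inf_{t\ge0}\hat Y_t\stackrel{d}{=}\inf_{s\ge0}X^*_s$. We have thus exhibited a coupling of $\inf_{s\ge0}X_s$ and $\inf_{s\ge0}X^*_s$ in which the first is a.s.\ no larger than the second, and the (easy direction of the) real-valued Strassen theorem then gives $\inf_{t\ge0}X_t\le_{st}\inf_{t\ge0}X^*_t$, which is the claim.

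The only step that needs a word of care --- and the one I expect to be the sole source of friction --- is the time-change invariance $\inf_{t\ge0}X_{T(t)}=\inf_{s\ge0}X_s$. It requires that the clock $T$ exhaust $[0,\infty)$, i.e.\ that $T(t)\to\infty$, equivalently that the quadratic variation $\langle M\rangle_\infty=\int_0^\infty\sigma_u^2\,du$ be a.s.\ infinite (and likewise $\int_0^\infty s(X^*_u)^2\,du=\infty$ a.s.). If instead a quadratic variation is finite, the corresponding clock is a bijection of a bounded interval onto $[0,\infty)$ and so the set of sampled times is still cofinal in $[0,\infty)$; either way the argument above goes through verbatim.
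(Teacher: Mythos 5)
Your proof is correct and follows essentially the same route as the paper: both deduce the corollary from Theorem~\ref{theo:comparisonviatimechange} via the time-change invariance $\inf_{t\ge 0}X_{T(t)}=\inf_{s\ge 0}X_s$ together with one of the equivalent characterizations of $\le_{st}$ recalled after Definition~\ref{def:st} (the paper uses the increasing-functional form with $h\mapsto g(\inf_t x_t)$, you use the Strassen coupling form; these are interchangeable here). Your extra remark on the clock exhausting $[0,\infty)$ is a point the paper passes over silently, but it does not change the argument.
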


\begin{proof}
Since $\inf_{t\ge 0} X_t = \inf_{s\ge 0} X_{T(s)}$ and $g\circ h\big((x_t)_{t\ge 0}\big) = g(\inf_{t\ge 0}x_t)$ is increasing w.r.t. $<_c$ for $g:\R\to\R$ increasing, the statement follows from Theorem \ref{theo:comparisonviatimechange} and the discussion after Definition \ref{def:st}.
\end{proof}

For the next result suppose that $\K(x)\equiv \K$ does not depend on $x$. Then
$$ {m\over s^2} = {m(x) \over s^2(x)},\quad\mbox{for all}\; x\in\R.$$
Let us define $M_t := \sup_{0\le s\le t} X_s$ and $M_t^*=\sup_{0\le s\le t} X_s^*$. Then we obtain that the extremal process $X^*$ satisfies:
\begin{theorem}\label{thm:drowdown}
Suppose that $\K(x)\equiv \K$ for all $x\in\R$ and $(X_t^*)_{t\ge 0}\in \A(x)$ is defined as in \eqref{eq:SDE-extremal} and $(X_t)_{t\ge 0}\in \A(x)$ is arbitrary. Moreover, assume that the initial state $x$ is non-negative. Then it holds for $\alpha\in [0,1]$
$$\Big( X_{T(t)}-\alpha M_{T(t)}\Big) \le_{st} \Big( X^*_{U(t)}-\alpha M^*_{U(t)}\Big),$$
where $T, U:\R_+\to\R_+$ are the time changes from Theorem \ref{theo:comparisonviatimechange}. \end{theorem}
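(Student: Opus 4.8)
The plan is to reduce the assertion to a pathwise inequality on one probability space and then invoke the Strassen-type characterisation of $\le_{st}$ recalled after Definition~\ref{def:st}. For a continuous process $Z$ write $\bar Z_t:=\sup_{0\le u\le t}Z_u$. Recall from \eqref{eq:timechange} that $Y_t:=X_{T(t)}$ satisfies $Y_t=x+\int_0^t\frac{\mu_{T(u)}}{\sigma^2_{T(u)}}\,du+W_t$. Since $\K(x)\equiv\K$, with $c:=\sup\{\mu/\sigma^2:(\mu,\sigma)\in\K\}$ we have $m/s^2\equiv c$, so the time-changed extremal process is $Y^*_t:=X^*_{U(t)}=x+ct+W^*_t$, a Brownian motion with drift $c$ started at $x$. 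Putting $\hat Y_t:=x+ct+W_t$ on the space carrying $Y$ we get $(\hat Y_t)_{t\ge0}\stackrel{d}{=}(Y^*_t)_{t\ge0}$, and the gap
\[
D_t:=\hat Y_t-Y_t=\int_0^t\Big(c-\frac{\mu_{T(u)}}{\sigma^2_{T(u)}}\Big)\,du
\]
is nonnegative and \emph{non-decreasing}, since its integrand is $\ge0$ by \eqref{eq:maxpoints} (in particular $\hat Y_t\ge Y_t$ for all $t$). This monotonicity of the gap is the structural fact that makes everything work, and I expect it to be the crux: it is exactly what fails once $\K$ depends on $x$.

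Next I would dispose of the time-change bookkeeping. Because $\sigma>0$, the clock $\langle M\rangle$ is continuous and strictly increasing, so $T$ is a continuous strictly increasing bijection of $[0,t]$ onto $[0,T(t)]$ with $T(0)=0$; hence $M_{T(t)}=\sup_{0\le s\le T(t)}X_s=\sup_{0\le u\le t}Y_u=\bar Y_t$, and likewise $M^*_{U(t)}=\bar Y^*_t$. So the claim becomes $(Y_t-\alpha\bar Y_t)_{t\ge0}\le_{st}(Y^*_t-\alpha\bar Y^*_t)_{t\ge0}$. Since $(Y^*_t-\alpha\bar Y^*_t)_{t\ge0}$ is a fixed continuous functional of the path $(Y^*_t)_{t\ge0}\stackrel{d}{=}(\hat Y_t)_{t\ge0}$, the characterisation after Definition~\ref{def:st} reduces the claim to the pathwise domination $Y_t-\alpha\bar Y_t\le\hat Y_t-\alpha\bar{\hat Y}_t$ for every $t\ge0$.

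For the pathwise step I would first take $\alpha=1$: using $D_u-D_t\le0$ for $u\le t$,
\[
\bar{\hat Y}_t-\hat Y_t=\sup_{0\le u\le t}\big[(Y_u-Y_t)+(D_u-D_t)\big]\le\sup_{0\le u\le t}(Y_u-Y_t)=\bar Y_t-Y_t,
\]
i.e.\ $\hat Y_t-\bar{\hat Y}_t\ge Y_t-\bar Y_t$. For general $\alpha\in[0,1]$ I then split
\[
\hat Y_t-\alpha\bar{\hat Y}_t=(1-\alpha)\hat Y_t+\alpha\big(\hat Y_t-\bar{\hat Y}_t\big)\ge(1-\alpha)Y_t+\alpha\big(Y_t-\bar Y_t\big)=Y_t-\alpha\bar Y_t,
\]
using $1-\alpha\ge0$ and $\hat Y_t\ge Y_t$ on the first summand and the case $\alpha=1$ on the second. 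Combined with $(\hat Y_t-\alpha\bar{\hat Y}_t)_{t\ge0}\stackrel{d}{=}(Y^*_t-\alpha\bar Y^*_t)_{t\ge0}$ this yields the stochastic ordering, since $Y_t=X_{T(t)}$, $\bar Y_t=M_{T(t)}$, $Y^*_t=X^*_{U(t)}$, $\bar Y^*_t=M^*_{U(t)}$. The remaining points are purely routine: the processes involved are genuine continuous processes, so (unlike in the proof of Theorem~\ref{theo:comparisonviatimechange}) no localisation is needed, and the functionals used are measurable. Note that the hypothesis $x\ge0$ is not actually invoked in the comparison; it only guarantees $X_0-\alpha M_0=(1-\alpha)x\ge0$, which is the natural starting configuration for the drawdown interpretation.
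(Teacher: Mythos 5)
Your proof is correct, and its skeleton --- couple $X$ and $X^*$ through the time changes on one probability space with a common driving Brownian motion, observe that the gap $D_t=\hat Y_t-Y_t$ is nonnegative and non-decreasing because $\mu_{T(u)}/\sigma_{T(u)}^2\le c=m/s^2$, then deduce a pathwise inequality that implies $\le_{st}$ --- is exactly the paper's. Where you diverge is the final pathwise step. The paper anchors its chain at the random time $\tau=\sup\{s<t: X^*_{U(s)}=M^*_{U(t)}\}$ at which the extremal process attains its running maximum, telescoping $X_{T(t)}-\alpha M_{T(t)}$ through $X_{T(\tau)}$ and using $M_{T(t)}\ge X_{T(\tau)}$ together with the monotone gap twice. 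You instead prove the pure-drawdown case $\alpha=1$ directly from $\bar{\hat Y}_t-\hat Y_t=\sup_{u\le t}\bigl[(Y_u-Y_t)+(D_u-D_t)\bigr]\le \bar Y_t-Y_t$ and then interpolate by writing $\hat Y_t-\alpha\bar{\hat Y}_t$ as the convex combination $(1-\alpha)\hat Y_t+\alpha(\hat Y_t-\bar{\hat Y}_t)$. The two decompositions are logically equivalent, but yours is somewhat cleaner: it stays strictly pathwise (the paper inserts $\le_{st}$ symbols into what is really a pathwise chain), it isolates exactly where the monotonicity of $D$ enters, and it supports your side observation that the hypothesis $x\ge 0$ is never actually used --- the paper's parenthetical ``$M_{T(t)}\ge 0$ since $X_0\ge 0$'' is likewise not needed for its own inequalities, which only require $M_{T(t)}\ge X_{T(\tau)}$. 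Finally, by noting that $\K(x)\equiv\K$ makes $m/s^2\equiv c$ constant and hence $\hat Y$ an explicit Brownian motion with drift, you identify the law of $\hat Y$ with that of $Y^*$ without any appeal to uniqueness theory for the SDE.
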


\begin{proof}
First note that $(X_t)$ and $(X_t^*)$ can be constructed on a common probability space without
changing their distribution such that the driving Brownian motion is the same (as in the proof of Theorem~\ref{theo:comparisonviatimechange}). Using this construction we obtain:
$$ X^*_{U(t)}-X_{T(t)} \stackrel{d}{=} \int_0^t \Big(\frac{m}{s^2}-{m(X_{T(u)}) \over \sigma^2(X_{T(u)})}\Big) du.$$
By our assumption ${m\over s^2} \ge {\mu(x) \over \sigma^2(x)}$ for all $x\in\R$ hence $X^*_{U(t)}\ge X_{T(t)}$ $\Pop$-a.s. for all $t >0$ and
$$X^*_{U(t)}-X_{T(t)} \uparrow \;\;\mbox{in}\; t \;\Pop-a.s.$$
Let $t>0$ be arbitrary and define $\tau:=\sup\{s<t : X^*_{U(s)} = M^*_{U(t)}\}$.  It holds (note that $M_{T(t)}\ge 0$ since $X_0\ge 0$)
\begin{eqnarray*}
  X_{T(t)} -\alpha M_{T(t)} &=& X_{T(t)} - X_{T(\tau)}
  +X_{T(\tau)} -\alpha M_{T(t)} \\
   &\le_{st}& X^*_{U(t)} - X^*_{U(\tau)}
  +X_{T(\tau)}(1 -\alpha) \\
   &\le_{st}& X^*_{U(t)} - X^*_{U(\tau)}  +X^*_{U(\tau)}(1 -\alpha)\\
  &=& X^*_{U(t)} - \alpha X^*_{U(\tau)} = X^*_{U(t)}
  -\alpha M^*_{U(t)}.
\end{eqnarray*}
This inequality holds also true when we consider jointly different time points $0\le t_1< t_2<\ldots <t_n$ and thus the statement follows.
\end{proof}

As an application we have that the extremal $X^*$ minimizes the probability of an $\alpha$ drawdown, which we define to be the event $X_t \leq \alpha M_t$ for some $t\in\R$.

\begin{corollary}
Under the assumptions of Theorem~\ref{thm:drowdown} we have that $X^*$ in \eqref{eq:SDE-extremal} minimizes the probability
\[
\mathbb{P}(X_t \leq \alpha M_t\; \mbox{for some}\; t\in\R), \quad \text{for $X \in \mathcal{A}(x)$},
\]
for any $\alpha \in (0,1)$.
\end{corollary}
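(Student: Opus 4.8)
The plan is to deduce this directly from Theorem~\ref{thm:drowdown}, exploiting that ``ever entering $(-\infty,0]$'' is a decreasing functional of the drawdown path. Fix an arbitrary $(X_t)_{t\ge0}\in\A(x)$, let $T,U:\R_+\to\R_+$ be the time changes of Theorem~\ref{theo:comparisonviatimechange}, and set $Y_t:=X_{T(t)}-\alpha M_{T(t)}$ and $Y^*_t:=X^*_{U(t)}-\alpha M^*_{U(t)}$, so that Theorem~\ref{thm:drowdown} yields $(Y_t)\le_{st}(Y^*_t)$. The first step is the purely deterministic remark that the drawdown event is not affected by the time change. Since $T$ is a continuous strictly increasing bijection of $\R_+$ with $T(0)=0$, one has $T([0,t])=[0,T(t)]$, hence $M_{T(t)}=\sup_{0\le s\le t}X_{T(s)}$ is the genuine running maximum of the reparametrised path $s\mapsto X_{T(s)}$, and $\{X_{T(t)}-\alpha M_{T(t)}:t\ge0\}=\{X_u-\alpha M_u:u\ge0\}$ as subsets of $\R$. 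Therefore $\{X_t\le\alpha M_t\text{ for some }t\ge0\}=\{Y_t\le0\text{ for some }t\ge0\}$, and likewise for $X^*$ with $U$ in place of $T$; so it suffices to prove $\mathbb{P}(Y^*_t\le0\text{ for some }t)\le\mathbb{P}(Y_t\le0\text{ for some }t)$.

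The second step is a coupling. By Strassen's theorem, recalled after Definition~\ref{def:st}---or, equivalently, directly from the construction inside the proof of Theorem~\ref{thm:drowdown}, where the displayed comparisons are in fact pathwise---we may realise $(\hat Y_t)\stackrel{d}{=}(Y_t)$ and $(\hat Y^*_t)\stackrel{d}{=}(Y^*_t)$ on a common probability space with $\hat Y_t\le\hat Y^*_t$ for all $t\ge0$, almost surely. On the full-measure event where this holds, $\hat Y^*_t\le0$ for some $t$ forces $\hat Y_t\le\hat Y^*_t\le0$ for that same $t$, so $\{\hat Y^*_t\le0\text{ for some }t\}\subseteq\{\hat Y_t\le0\text{ for some }t\}$ up to a null set; both events are measurable because, for a continuous adapted process, the first entry time into the closed set $(-\infty,0]$ is a stopping time and so the event of ever entering it is measurable. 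Passing to probabilities and using equality in law gives $\mathbb{P}(Y^*_t\le0\text{ for some }t)\le\mathbb{P}(Y_t\le0\text{ for some }t)$, which with the first step becomes $\mathbb{P}(X^*_t\le\alpha M^*_t\text{ for some }t)\le\mathbb{P}(X_t\le\alpha M_t\text{ for some }t)$. Since $(X_t)\in\A(x)$ was arbitrary, $X^*$ is optimal. (The restriction $\alpha\in(0,1)$ only excludes degenerate cases: $\alpha=1$ makes the event certain, and $\alpha=0$ is the ruin problem already treated in Corollary~\ref{cor:inf}.)

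I do not expect a real obstacle: the corollary is essentially a repackaging of Theorem~\ref{thm:drowdown}. The two points that deserve a line of justification are the ones used above. First, that $T$ and $U$ are continuous strictly increasing bijections of $\R_+$: this is part of the conclusion of Theorem~\ref{theo:comparisonviatimechange}, and under the standing hypotheses it holds because $\sigma>0$ and $s>0$ make the clocks $\langle M\rangle_\cdot=\int_0^\cdot\sigma_u^2\,du$ and $\int_0^\cdot s(X^*_u)^2\,du$ continuous and strictly increasing, while the linear-growth assumption prevents them from exploding, so their inverses are globally defined on $\R_+$. Second, the measurability of the hitting event, which is standard for continuous adapted processes. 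If one prefers to avoid Strassen, one can instead feed the bounded, $<_c$-increasing functional $(y_t)_{t\ge0}\mapsto\mathbf{1}\{y_t>0\text{ for all }t\ge0\}$ into the $C[0,\infty)$ characterisation of $\le_{st}$ from the discussion after Definition~\ref{def:st}, which gives the same inequality.
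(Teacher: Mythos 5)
Your argument is correct, and it is exactly the deduction the paper intends: the paper states this corollary without any proof, treating it as an immediate consequence of Theorem~\ref{thm:drowdown}, and you have supplied the two implicit steps (invariance of the drawdown event under the continuous strictly increasing time changes $T,U$, and the passage from $\le_{st}$ to the hitting-probability inequality via the Strassen coupling or, equivalently, the $<_c$-increasing functional $\mathbf{1}\{y_t>0 \ \forall t\}$ from the discussion after Definition~\ref{def:st}). No gap; your parenthetical remarks on surjectivity of the time changes and measurability of the hitting event address the only technical points worth raising.
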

The above criteria could be useful for fund managers who do offer this type of guarantee in order to
satisfy the aversion to deception of the investors. (For relevant references and an alternative optimization problem in which the drawdown is taken to be a constraint, see \cite{MR2410840}.)

\section{Some Special Problems}\label{sec:specialmodels}
\subsection{Controlled Diffusions}
A special case of the model in Section \ref{sec:mod} is a controlled It\^{o}-process given by
$$X_t = x+  \int_0^t \mu(X_s,u_s) ds + \int_0^t\sigma(X_s,u_s) d W_s,\quad t\ge 0$$
where $u=(u_t)_{t\ge 0}$ is an $\F$-adapted control process taking values in a set $U\subset \R^m$ and
$\mu:\R\times U\to \R$ and $\sigma:\R\times U\to \R$ are continuous
functions and satisfy the usual growth conditions which imply that
the stochastic differential equation has a solution. Here the extremal process is obtained by
\begin{equation}\label{eq:maxpoints2}
{\mu(x,u^*) \over \sigma^2(x,u^*)}=\sup\left\{{\mu(x,u) \over \sigma^2(x,u)}: u\in U\right\}, \quad x \in \R.
\end{equation}

\subsection{Switching Problems}
Another special case of the model presented in Section \ref{sec:mod} are the so-called switching problems. Here we suppose that we have a set of possible regimes $\I =\{1,\ldots,m\}$ and a switching control consists of a sequence $(\tau_n,i_n)_{n\in\N}$ where $(\tau_n)$ is an increasing sequence of $\F$-stopping times and $i_n$ are $\F_{\tau_n}$-measurable, $\I$-valued random variables, representing a regime. Given such a sequence we define the control process
$$ I_t := \sum_{n\ge 0} i_n 1_{[\tau_n,\tau_{n+1})}(t),\quad t\ge 0.$$
The controlled process is then given by
$$ dX_t = \mu(X_t,I_t) dt + \sigma(X_t,I_t)dW_t,\quad t\ge0$$
for suitable functions $\mu_i(\cdot)=\mu(\cdot,i)$ and $\sigma_i(\cdot)=\sigma(\cdot,i)$. Let $$R_i := \left\{x\in\R : {\mu(x,i) \over \sigma^2(x,i)}=\max\left\{{\mu(x,j) \over \sigma^2(x,j)}: j \in \I\right\}\right\}.$$
The extremal process is here obtained by choosing regime $i\in\I$ if and only if the process is currently in set $R_i$.

\section{Minimizing the Probability of Ruin}\label{sec:ruin}
In this section we consider some more specific applications of our previous findings.
Corollary \ref{cor:inf} in particular implies that the process $X^*$ minimizes the probability $\Pop_x(X_t < b\;\mbox{for some time point}\: t\in\R)=\Pop(\inf_{t\ge 0} X_t < b)$ for arbitrary $b\in\R$ whenever this is a reasonable and non-trivial problem.

\begin{theorem}\label{thm:prb-o-ruin}
The extremal process $X^*$ (which does not depend on $b$) minimizes the probability of ruin. The minimum probability of ruin is given by
\[
\inf_{X \in \mathcal{A}(x)}\Pop_x(\inf_{t\ge 0} X_t < b)=1-\frac{p(x)}{p(\infty)},
\]
where $p$ is the scale function of $X^*$, i.e.,
\[
p(x)=\int_{b}^{x}\exp\left[-2\int_b^{\xi}{m(u) \over s^2(u)}du\right]d \xi.
\]
\end{theorem}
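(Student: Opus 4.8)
The plan is to combine Corollary~\ref{cor:inf} with a standard computation of the ruin probability for the one-dimensional diffusion $X^*$ via its scale function. The key observation is that the map $(x_t)_{t\ge 0}\mapsto \mathbf{1}_{\{\inf_t x_t < b\}}$ is \emph{decreasing} with respect to $<_c$, so that $\Pop(\inf_t X_t < b) = \Eop\,\mathbf{1}_{\{\inf_t X_t<b\}} \ge \Eop\,\mathbf{1}_{\{\inf_t X_t^*<b\}} = \Pop(\inf_t X_t^* < b)$; equivalently this is just Corollary~\ref{cor:inf} applied to the increasing function $g(y)=\mathbf{1}_{\{y\ge b\}}$. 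Since $X^*\in\A(x)$ itself, the infimum over $\A(x)$ is attained at $X^*$. So the first step is simply to quote Corollary~\ref{cor:inf} and note that $X^*$ does not depend on $b$, which is clear from \eqref{eq:SDE-extremal} and \eqref{eq:maxpoints}.

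The second step is to evaluate $\Pop_x(\inf_{t\ge 0}X_t^* < b)$ explicitly. Here I would invoke the classical theory of one-dimensional regular diffusions: for the weak solution of \eqref{eq:SDE-extremal}, the scale function is $p(x)=\int_b^x \exp\!\big[-2\int_b^\xi \tfrac{m(u)}{s^2(u)}\,du\big]\,d\xi$, and $p(X^*_t)$ is a local martingale. For $x>b$, stopping $p(X^*)$ at the exit time $\rho_N:=\inf\{t: X^*_t\notin(b,N)\}$ and using the optional stopping theorem on the bounded martingale $p(X^*_{t\wedge\rho_N})$ gives
\[
\Pop_x\big(X^*_{\rho_N}=b\big) = \frac{p(N)-p(x)}{p(N)-p(b)} = \frac{p(N)-p(x)}{p(N)},
\]
since $p(b)=0$. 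Letting $N\to\infty$ and using continuity of paths together with $\inf_t X^*_t < b \iff X^*$ ever reaches $b$ (again by path-continuity, starting from $x>b$), one obtains $\Pop_x(\inf_t X^*_t<b)=1-p(x)/p(\infty)$. When $p(\infty)=\infty$ this reads as $0$ (ruin is certain to be avoided with... rather, avoided with probability one only in the limiting sense — more precisely the probability of ruin is $0$), and when $p(\infty)<\infty$ it is the stated formula; the case $x\le b$ is trivial since then ruin has already occurred and the formula gives $1-p(x)/p(\infty)$ with $p(x)\le 0$, so one restricts to $x>b$ for the nontrivial statement.

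The main obstacle is the justification of the limit $N\to\infty$ and the identification of $\{\inf_t X^*_t<b\}$ with the event that the exit-from-above time is finite: one must rule out the possibility that $X^*$ drifts to $+\infty$ without ever hitting $b$ contributing incorrectly, and one must check that $\rho_N\to\rho_\infty:=\inf\{t:X^*_t=b\}$ monotonically on $\{\rho_\infty<\infty\}$ while $\{\rho_\infty=\infty\}=\{\lim_t X^*_t=+\infty\}$ up to a null set — this is precisely where Feller's boundary classification / the behaviour of $p$ at $+\infty$ enters, and where the local Lipschitz and linear growth hypotheses on $m/s^2$ (ensuring no explosion in finite time for the time-changed process $\hat Y$) are used. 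Everything else is a routine application of the scale-function machinery for regular diffusions, so I would state these facts and cite a standard reference rather than reprove them.
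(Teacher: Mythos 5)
Your proposal is correct and follows essentially the same route as the paper: the minimality of the ruin probability for $X^*$ is deduced from Corollary~\ref{cor:inf} applied to the increasing indicator $g(y)=\mathbf{1}_{\{y\ge b\}}$ (the paper uses $h(x)=1-1_{\{x<b\}}$), and the explicit value is the classical scale-function hitting probability, which the paper simply cites from Karatzas--Shreve rather than rederiving via optional stopping as you do.
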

\begin{proof}
The proof that $X^*$ minimizes the probability of ruin follows from Corollary \ref{cor:inf} and the fact that $h(x)=1-1_{\{x < b\}}$ is an increasing function. The expression for the minimum probability of ruin is then simply the probability that $X^*$ does not reach $b$, which can be easily computed in terms of the scale function as in page 344 of \cite{ks}.
\end{proof}

In particular if $(X_t)$ is the surplus process of an insurance company and $b=0$ we are faced with a so-called problem of minimizing the ruin problem which we will analyze in the next sections. We observe the underlying optimality principle of seemingly different problems are the same and are given by the extremal process $X^*$. In the stochastic control approach that was taken in the literature each case would have to be solved and then \emph{verified} separately.

\subsection{Controlling Risk Processes by Proportional Reinsurance}\label{sec:propreinsurance}
In Section 2.2.1 of \cite{schm08}  one can find a proportional reinsurance
model where the surplus process is given by
$$ X_t = x_0 + \int_0^t \big( u_s \theta-(\theta-\eta)\big) ds +
\sigma\int_0^t u_s d W_s$$ and $u_t\in[0,1]$ denotes the retention level at time $t$ of a proportional reinsurance contract.  In this case we have
$$ \mu(u)=u\theta-(\theta-\eta),\quad \sigma(u) = u \sigma$$
and $U=[0,1]$. We assume that $\eta<\theta$.

We divide the problem into two cases. First, we assume that $u>\varepsilon$ for small $\varepsilon>0$ and apply Theorem~\ref{thm:prb-o-ruin}. With this restriction we know in order to minimize the probability of ruin we have to maximize the expression
$$ \frac{\mu(u)}{\sigma^2(u)} = \frac{u\theta -\theta+\eta}{u^2
\sigma},\quad u\in(\varepsilon,1].$$  This gives us $$u^* =
2(1-\frac{\eta}\theta)\wedge1.$$

When $u_t \leq \varepsilon$, for some $t$, changing the control at that time to $u_t=(\theta-\eta)/\theta$, one could build a process that path-wise dominates the original controlled process. Therefore, without loss of generality we can assume that $u>\varepsilon$ and, hence, the control $u_t \equiv u^*$
minimizes the probability of ruin in this case. Compare with Lemma 2.4 in \cite{schm08}, in which the problem has been solved by constructing the solution explicitly and then using a verification theorem.

\subsection{Controlling Risk Processes by XL Reinsurance}
Alternatively one could try and find the excess-of-loss reinsurance which minimizes the probability of ruin. In such a reinsurance contract a limit $u\ge 0$ is chosen such that the insurance company has to pay $\min\{Z,u\}$ when $Z$ is a claim. In this case it is reasonable to define the surplus process by
$$ X_t = x_0 + \int_0^t \big( \theta \mu(u_s)-(\theta-\eta)\Eop Z\big) ds +
\sigma\int_0^t \sigma(u_s) d W_s$$ and $u_t\ge 0$ denotes the limit at time $t$ of an excess-of-loss reinsurance contract and $$ \mu(u) := \Eop\min\{Z,u\},\quad \sigma^2(u) := \Eop \min\{Z,u\}^2.$$ Again we assume $\eta<\theta$. Proceeding like in the previous subsection we have to maximize
$$ \frac{\theta \mu(u)-(\theta-\eta)\Eop Z}{\lambda^2 \sigma^2(u)}$$ over $u\in U=[0,\infty)$ (the case $u=0$ can be treated as in subsection \ref{sec:propreinsurance}). If $Z$ is exponentially distributed with parameter $\lambda>0$ then we have to maximize
$$ \frac{\frac\theta\lambda (1-e^{-\lambda u})-(\theta-\eta)\frac1\lambda}{2-e^{-\lambda u}(2+2u\lambda -u^2 \lambda^2(\lambda-1))}$$ over $u\ge 0$. For $\theta=2, \lambda=\eta=1$ the solution is given by $u^* = 2+L(-2e^{-2})$ where $L(x)$ is the upper branch of Lambert's $W$-function which is the inverse of $f(x)=xe^x$. To the best of our knowledge this problem has not been treated before.

\subsection{Controlling Risk Processes by Investment}
The next problem which has been considered in \cite{schm08}, Section 2.2.2 is the problem of optimal investment
for an insurance company. Here the surplus process is given by
$$dS_t = \eta dt +\sigma_S dW_t^S$$
and the insurer can invest into a risky asset given by
$$dZ_t = mZ_t dt+\sigma_I Z_t dW_t^I.$$
The Brownian motions $(W_t^S)_{t\ge 0}$ and $(W_t^I)_{t\ge 0}$ are supposed to be independent. Given the investment strategy $(u_t)_{t\ge 0}$, where $u_t\in\R$ is the amount invested in the risky asset at time $t$ we arrive at the controlled surplus process
$$dX_t = (\eta+u_t m)dt + \sigma_S dW_t^S +\sigma_I u_t dW_t^I.$$
This process is in distribution equal to the process
$$dX_t = (\eta+u_t m)dt + \sqrt{\sigma_S^2+\sigma_I^2 u_t^2} dW_t$$
with one Brownian motion $(W_t)$. When we want to minimize the probability of ruin in this case we have to maximize
$$ \frac{\eta+um}{\sigma_S^2+\sigma_I^2 u^2}$$ over all $u\in\R$. This yields
$$u^*\equiv \frac{1}{m\sigma_I} \Big(\sqrt{\eta^2 \sigma_I^2+m^2\sigma_S^2}-\eta \sigma_I \Big)$$
compare Theorem 2.7 in \cite{schm08}

\subsection{Controlling Risk Processes by Reinsurance and Investment}
In section 2.2.3 of \cite{schm08} the combination of both investment and reinsurance is considered. The controlled process is here given by
$$dX_t = (b_t\theta-(\theta-\eta)+mA_t)dt + \sqrt{\sigma_S^2b_t^2+\sigma_I^2 A_t^2}dW_t$$
with control $u_t=(b_t,A_t)$ and assumption $\eta<\theta\le\eta+\sqrt{\eta^2+m^2\sigma_S^2/\sigma^2}$. In order to minimize the probability of ruin in this case we have to maximize
\begin{equation} \label{eq:ratio}
\frac{b\theta-(\theta-\eta)+mA}{\sigma_S^2b^2+\sigma_I^2 A^2}
\end{equation}
over $A\in\R$ and $b\in[0,1]$ which gives us
\begin{equation}\label{eq:optimal}
A^*=\frac{2m\sigma_S^2(\theta-\eta)}{m^2\sigma_S^2+\theta^2\sigma_I^2},\quad b^*= \frac{2\theta\sigma_I^2 (\theta-\eta)}{m^2\sigma_S^2+\theta^2\sigma_I^2}\wedge 1,
\end{equation}
compare Theorem 2.8 in \cite{schm08}. The case $b=A=0$ can be treated as in subsection \ref{sec:propreinsurance}.

In \cite{MR2200024} a slightly different approach was taken.
Inspired by the results of \cite{MR812818}, the authors in \cite{MR2200024} first guess the optimal portfolio by minimizing the ratio \eqref{eq:ratio}, use the form of the portfolio to construct the value function explicitly and then use a verification theorem to prove optimality. We, on the other hand, have directly (using the stochastic ordering arguments) shown that the strategy in \eqref{eq:optimal} is optimal. We also have shown the optimality of the extremal process $X^*$ for general diffusion processes and calculated the corresponding minimum probability of ruin. As an aside it also turns out that the same strategies minimizes the drawdown.

\subsection{Controlling Investment into a Financial Market}
 Let $(S_1,\cdots,S_n)$ be the adapted price processes of risky assets satisfying
\[
{dS_i(t) \over S_i(t)}=\mu_idt+\sum_{\nu=1}^{n}\sigma_{i \nu}
dW_{\nu}(t),
\]
with $S_i(0)=s_i>0$. We will denote the covariance matrix by
$a:=\sigma \sigma'$ which we will assume to be positive definite.

Let $(\pi_t)_{t\ge 0}=(\pi_1(t),\cdots \pi_n(t))_{t\ge 0}$ denote an adapted
portfolio process where $\pi_i(t)$ gives the proportion of the wealth which is invested in asset $i$ at time $t$, i.e., $\sum_{i=1}^{n} \pi_i(t)=1$ for all $t\ge 0$. Then the wealth satisfies
\[
{dX^{\pi}(t) \over X^{\pi}(t)}=\sum_{i=1}^{n}\pi_i(t){d S_i(t) \over
S_i(t)}=\mu^{\pi}(t)dt+\sum_{\nu=1}^{n}\sigma_{\nu}^{\pi}(t)dW_{\nu}(t)
\]
with $X^{\pi}(0)=1$. Here,
\[
\mu^{\pi}(t):=\sum_{i=1}^{n}\pi_i(t)\mu_i, \quad
\sigma_{\nu}^{\pi}:=\sum_{i=1}^{n} \pi_i(t) \sigma_{i \nu}
\]
Solution to this equation is
\[
\log
X^{\pi}(t)=\int_0^{t}\gamma^{\pi}(s)ds+\sum_{\nu=1}^{n}\int_0^{T}\sigma_{\nu}^{\pi}(s)dW_{\nu}(s)
\]
where \[ \gamma^\pi(t) = \mu^\pi(t)-\frac12 a^\pi(t).\] Let us
denote $a^{\pi \pi}=\sum_{\nu=1}^{d}(\sigma_{\nu}^{\pi})^2.$ Obviously for $a>0$ the probability
$$\Pop(X_t^\pi > a\; \mbox{for all}\; t\ge 0) = \Pop(\log X_t^\pi > \log(a)\; \mbox{for all}\; t\ge 0).$$
Now, thanks to Theorem~\ref{thm:prb-o-ruin} when the drift and the
volatility coefficients are constant then the portfolio that
minimizes the probability of ruin is given by the maximizer of
\[
{\gamma^{\pi} \over a^{\pi \pi}}={\sum_{i=1}^n \pi_i \mu_i \over
\sum_{i=1}^n \sum_{j=1}^n \pi_i a_{ij}\pi_j}-{1 \over 2},
\]
subject to the constraint that $\sum_{i=1}^{n}\pi_i=1$, instead of the maximizer of the so-called Sharpe ratio (the ratio of mean value of the portfolio to its standard deviation) in the mean-variance portfolio analysis of Markowitz
or the maximizer of the growth rate (which is the numerator of the above expression) which yields the log/growth optimal portfolio whose maximizer is given by the so-called Merton ratio. For further comparisons of optimizers of different optimality criteria, see
Section 4 of \cite{Fernholz-Karatzas-survey}.

\bibliography{bib-ruin}
\bibliographystyle{siam}

\end{document}